\renewcommand{\Re}{\operatorname{Re}}
\numberwithin{equation}{section}
\theoremstyle{plain}
\newtheorem{theorem}{Theorem}[section]
\newtheorem{lemma}[theorem]{Lemma}
\newtheorem{corollary}[theorem]{Corollary}
\newcommand{\Q}{\mathbb{Q}}
\newcommand{\ii}{\mathbf{i}}
\newcommand{\JJ}{\mathbf{j}}
\newcommand{\ZZ}{\mathbf{z}}
\newcommand{\AAA}{\mathbf{A}}
\newcommand{\trans}{^{\!\top\!}}
\newcommand{\kron}[2]{\delta(#1,#2)}
\newcommand{\Bpol}[1]{\mathbf{B}_{#1}}
\newcommand{\hsum}[1]{\sum_{\ell=1}^{\infty}\frac{(-1)^{\ell}H_{\ell}}{\ell^{#1}}} % H(s)
\def\proof{\@ifnextchar[{\@oproof}{\@nproof}}
\def\@oproof[#1][#2]{\trivlist\item[\hskip\labelsep\textit{Proof of\
#1.}~]\ignorespaces}
\def\@nproof{\trivlist\item[\hskip\labelsep\textit{Proof.}~]\ignorespaces}
\begin{document}
\title[A family of integrals related to values of the Riemann zeta function]{A family of integrals related to values of the Riemann zeta function}

\author{Rahul Kumar}\thanks{2010 \textit{Mathematics Subject Classification.}
 Primary 11M06, 26B15; Secondary 33B15.\\
\textit{Keywords and phrases.} Riemann zeta function, integral evaluations, alternating harmonic number, Euler sums, Polylogarithmic integrals}
\address{Department of Mathematics, The Pennsylvania State University, University Park, PA, U.S.A.}
\email{rjk6031@psu.edu}

\author{Paul Levrie}
\address{Faculty of Applied Engineering, University of Antwerp, Groenenborgerlaan 171, 2020 Antwerpen, Belgium}
\address{Department of Computer Science, KU Leuven, P.O. Box 2402, 3001 Heverlee, Belgium}
\email{paul.levrie@kuleuven.be}
\author{Jean-Christophe Pain}
\address{CEA, DAM, DIF, F-91297 Arpajon, France}
\address{Universit\'e Paris-Saclay, CEA, Laboratoire Mati\`ere en Conditions Extr\^emes, F-91680 Bruy\`eres-le-Ch\^atel, France}
\email{jean-christophe.pain@cea.fr}

\author{Victor Scharaschkin}
\address{Department of Mathematics. University of Queensland. St Lucia, Australia, 4072}
\email{v.scharaschkin@gmail.com}

\begin{abstract}
We propose a relation between values of the Riemann zeta function $\zeta$ and a family of integrals. This results in an integral representation for $\zeta(2p)$, where $p$ is a positive integer, and an expression of $\zeta(2p+1)$ involving one of the above mentioned integrals together with a harmonic-number sum. Simplification of the latter eventually leads to an integral representation of $\zeta(2p+1)$.
\end{abstract}

\maketitle

\section{Introduction}\label{intro}

In this paper we consider integrals of the form 
\begin{align}\label{mainint}
\int_{-\infty}^\infty \frac{t^{n}\log\left(1+e^{t}\right)}{1+e^t}\, dt.
\end{align}
These have been studied previously in statistical plasma physics, particularly in the so-called Sommerfeld temperature-expansion of the electronic entropy, where they play a major role in the calculation of the equation of state of dense plasmas \cite{arnault2023}.  In this regard an integral representation for $\zeta(4)$, $\zeta$ being the Riemann zeta function, was obtained in \cite{pain2023}.  The corresponding expression was obtained using relations for polylogarithms \cite{apostol2010,bailey1997,lewin1981,lewin1991,srivastava2012} and a possible generalization to any even argument of the zeta function was also considered there.  The formula for $\zeta(4)$ reads (\cite{pain2023}, Eq. (32)]):
\[
\zeta(4)=\dfrac17\int_{-\infty}^{+\infty}
\dfrac{t^2\ln (1+e^t)}{1+e^t}dt,
\]
which also follows from (\cite{lewin1981}, Eqs. (7.61), (7.62) and (7.65)]):
\[
\zeta(4)=-\dfrac12\int_{0}^{1}\dfrac{\ln^2 t\ln (1-t)}{t}dt
=-\dfrac16\int_{0}^{1}\dfrac{\ln^3 t}{1-t}dt
=-2\int_{0}^{1}\dfrac{\ln t\ln^2 (1-t)}{t}dt.
\]

Recall Euler's famous result \cite{edwards2001} that the Riemann zeta function for even values of the argument is given by
\begin{equation}\label{eq-even-zeta-Euler}
\zeta(2k)=\sum_{n=1}^{\infty}\frac1{n^{2k}} \ = \ \frac{(-1)^{k+1}B_{2k} \ (2\pi)^{2k}} {2 \, (2k)!}, \qquad k\geq 1 
\end{equation}
where $B_j$ are the Bernoulli numbers.  The case of odd values of the argument is much more complicated, although some expressions exist (see for instance \cite{chamberland2011}).  A number of integral representations of the Riemann zeta function also exist. For instance, in the case where $\Re(s) > 1$, the following representation has been known since Euler:
\begin{equation}
\zeta(s)={\frac {1}{\Gamma (s)}}\int _{0}^{1}{\frac {(-\log u)^{s-1}}{1-u}}\,du,
\end{equation}
where $\Gamma$ represents the usual Gamma function.  Making a change of variables, the latter can be written as
\begin{equation}\label{eq-zeta-Gamma}
\zeta(s)=\frac{1}{\Gamma(s)}\int_0^{\infty}\frac{t^{s-1}}{e^t-1}\,dt.
\end{equation}
Equation~(\ref{eq-zeta-Gamma}) can be interpreted as a particular case of a general transformation of Dirichlet series.  Indeed, $s \longmapsto \zeta(s)\Gamma(s)$ is the Mellin transform of the function $t\mapsto 1/(e^t-1)$.  Alternatively, using the Euler-Maclaurin formula \cite{cartier1992, tenenbaum2008}, one obtains, for $\Re(s)>1$:
\begin{equation*}
\zeta(s) = \frac{1}{s-1}+ \frac{1}{2} +\sum _{k=2}^{n}\textstyle \displaystyle\frac{(s)_{k-1}}{k! \vphantom{\int^1}} \,B_{k}
-{\displaystyle\frac {(s)_{n}}{n! \vphantom{\int^1}}}\displaystyle \!\int _{1}^{\infty }\frac{\Bpol{n}\bigl(x-\lfloor x \rfloor\bigr)}{x^{n+s}}\,dx,
\end{equation*}
where $\lfloor x\rfloor$ denotes the integer part of $x$, $(a)_n=a(a+1)\cdots (a+n-1)$ the Pochhammer symbol and the $\Bpol{n}(x)$ are the usual Bernoulli polynomials
\begin{equation}\label{eq-BernPolyDef}
\Bpol{n}(x) = \sum\limits_{k=0}^n \binom{n}{k}B_{n-k}\, x^k 
\end{equation}
(with the convention that $B_1=-1/2$).

There are many other intriguing integrals for $\zeta$ values in the literature.  Borwein and Borwein obtained the following \cite{borwein1995}:
\begin{equation}
\zeta(4)=\frac{2}{11\pi}\int_0^{\pi}\theta^2\log^2\left[2\cos\left(\frac{\theta}{2}\right)\right]d\theta,
\end{equation}
while in \cite[Eq. (1.16)]{sofo2019} Sofo gives:%
\begin{equation}
\zeta(4) = -\frac{8}{3}\int_0^1 \frac{\log(1-x)(\log(1+x))^2}{x}\,dx. 
\end{equation}
In this work, we present the following representations of the Riemann zeta function:
\begin{theorem}\label{even}
For any natural number $n$, we have
\begin{align}\label{even eqn}
\zeta(2n)=\frac{1}{n(2-2^{2-2n})\Gamma(2n-1)}\int_{-\infty}^\infty \frac{t^{2n-2}\log\left(1+e^{t}\right)}{1+e^t}dt.
\end{align}
\end{theorem}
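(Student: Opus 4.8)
The plan is to collapse the two–sided integral to an integral over $(0,\infty)$, expand the resulting integrand in powers of $e^{-t}$, integrate term by term, and recognize the Dirichlet eta function at the end. Throughout one takes $n\ge1$, so that $2n-2\ge0$ and there is no difficulty at $t=0$.

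Write $I_n$ for the integral in \eqref{even eqn}. First I would split $I_n$ at $t=0$ and apply $t\mapsto-t$ to the part over $(-\infty,0)$; since $2n-2$ is even the powers of $t$ are unaffected, so
\[
I_n=\int_0^\infty t^{2n-2}\!\left[\frac{\log(1+e^{t})}{1+e^{t}}+\frac{\log(1+e^{-t})}{1+e^{-t}}\right]\!dt .
\]
Using $\log(1+e^{-t})=\log(1+e^{t})-t$ and $1/(1+e^{-t})=e^{t}/(1+e^{t})$, a short manipulation turns the bracket into $\log(1+e^{t})-t e^{t}/(1+e^{t})$, and then (splitting off the linear part once more) into
\[
\frac{\log(1+e^{t})}{1+e^{t}}+\frac{\log(1+e^{-t})}{1+e^{-t}}
=\log(1+e^{-t})+\frac{t\,e^{-t}}{1+e^{-t}},
\]
each summand being absolutely integrable on $(0,\infty)$.

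Next I would insert the expansions $e^{-t}/(1+e^{-t})=\sum_{k\ge1}(-1)^{k-1}e^{-kt}$ and $\log(1+e^{-t})=\sum_{k\ge1}\frac{(-1)^{k-1}}{k}e^{-kt}$, valid for $t>0$. Both resulting double integrals converge absolutely (Tonelli: $\int_0^\infty t^{2n-2}\sum_k k^{-1}e^{-kt}\,dt=\Gamma(2n-1)\zeta(2n)<\infty$, and likewise with $t^{2n-1}$ giving $\Gamma(2n)\zeta(2n)$), so term-by-term integration is legitimate, and with $\int_0^\infty t^{s-1}e^{-kt}\,dt=\Gamma(s)\,k^{-s}$ one gets
\[
I_n=\Gamma(2n-1)\,\eta(2n)+\Gamma(2n)\,\eta(2n),\qquad
\eta(s):=\sum_{k=1}^\infty\frac{(-1)^{k-1}}{k^{s}}=(1-2^{1-s})\zeta(s).
\]
Finally, $\Gamma(2n)+\Gamma(2n-1)=\bigl((2n-1)+1\bigr)\Gamma(2n-1)=2n\,\Gamma(2n-1)$ and $2(1-2^{1-2n})=2-2^{2-2n}$, so $I_n=n\,(2-2^{2-2n})\,\Gamma(2n-1)\,\zeta(2n)$, which rearranges to \eqref{even eqn}.

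The only real obstacle is the bookkeeping around convergence: a naive split of $I_n$ produces the two individually divergent integrals $\int_0^\infty t^{2n-2}\log(1+e^{t})\,dt$ and $\int_0^\infty t^{2n-1}e^{t}/(1+e^{t})\,dt$, so the cancellation between the two half-lines must be carried out \emph{before} passing to series. Once the integrand has been rewritten in terms of $e^{-t}$, the remaining steps (dominated/monotone convergence for the interchange, and the Gamma identities) are routine.
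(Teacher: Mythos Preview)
Your proof is correct and follows essentially the same route as the paper: both split the integral at $t=0$, exploit the evenness of the exponent together with $\log(1+e^{t})=t+\log(1+e^{-t})$ and $\tfrac{1}{1+e^{t}}+\tfrac{1}{1+e^{-t}}=1$ to reduce the integrand on $(0,\infty)$ to $\log(1+e^{-t})+t/(1+e^{t})$, and then evaluate the two resulting pieces as $\Gamma(2n-1)\eta(2n)$ and $\Gamma(2n)\eta(2n)$. The only cosmetic difference is that you evaluate both pieces by direct series expansion, whereas the paper handles $\int_0^\infty t^{2n-2}\log(1+e^{-t})\,dt$ by one integration by parts (turning it into $\tfrac{1}{2n-1}\int_0^\infty t^{2n-1}/(1+e^{t})\,dt$) and then invokes the standard integral representation of~$\eta$.
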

The counter-part of the above result for odd arguments is more involved (as usual).  To state it, we need the alternating Euler sum:
\begin{align}\label{halphas}
\hsum{s}\qquad (\mathrm{Re}(s)>1),
\end{align}
where $H_{\ell}$ denotes the $\ell$-th harmonic number, given by
\begin{align*}
H_{\ell}:=\sum_{j=1}^{\ell}\frac{1}{j}
\end{align*}
(with $H_0$ defined to be~$0$).

We shall show the following.

\begin{theorem}\label{odd}

For any natural number $n$, we have
\begin{align*}
\zeta(2n+1)=&-\frac{2^{2n+1}}{(2n+1)(2^{2n}-1)}\hsum{2n}\\
&+\frac{2^{2n}}{(2n+1)\Gamma(2n)(2^{2n}-1)}\int_{-\infty}^\infty \frac{t^{2n-1}\log\left(1+e^{t}\right)}{1+e^t}dt.
\end{align*}
\end{theorem}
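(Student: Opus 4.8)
The plan is to compute the integral
\[
I \;:=\; \int_{-\infty}^\infty \frac{t^{2n-1}\log(1+e^t)}{1+e^t}\,dt
\]
in closed form in terms of $\zeta(2n+1)$ and the Euler sum $\hsum{2n}$, and then to solve the resulting identity for $\zeta(2n+1)$. First I would split $I=\int_{-\infty}^0+\int_0^\infty$ and apply the substitution $t\mapsto -t$ in the first piece; because the exponent $2n-1$ is odd this produces a minus sign, giving
\[
I \;=\; \int_0^\infty \frac{t^{2n-1}\log(1+e^t)}{1+e^t}\,dt \;-\; \int_0^\infty \frac{t^{2n-1}\log(1+e^{-t})}{1+e^{-t}}\,dt .
\]
In the first integral I would use the elementary identities $\log(1+e^t)=t+\log(1+e^{-t})$ and $\dfrac{1}{1+e^t}=\dfrac{e^{-t}}{1+e^{-t}}=\dfrac{1}{e^t+1}$ to rewrite its integrand as $\dfrac{t^{2n}}{e^t+1}+\dfrac{t^{2n-1}\log(1+e^{-t})}{e^t+1}$; the would‑be divergence at $+\infty$ coming from $\log(1+e^t)\sim t$ has cancelled, and all three integrals now occurring are absolutely convergent on $(0,\infty)$ (and $I$ itself converges, its integrand behaving like $t^{2n}e^{-t}$ at $+\infty$ and like $|t|^{2n-1}e^{t}$ at $-\infty$).

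Next I would evaluate the three integrals. The first is the classical Dirichlet‑eta integral $\int_0^\infty \dfrac{t^{s-1}}{e^t+1}\,dt=\Gamma(s)\eta(s)$ at $s=2n+1$, where $\eta(s)=\sum_{m\geq 1}(-1)^{m-1}m^{-s}=(1-2^{1-s})\zeta(s)$. For the remaining two I would expand
\[
\frac{\log(1+e^{-t})}{1+e^{-t}} \;=\; \sum_{m=1}^\infty (-1)^{m-1}H_m\,e^{-mt}\qquad (t>0),
\]
which is the Cauchy product of the Maclaurin series of $\log(1+x)$ and of $1/(1+x)$ evaluated at $x=e^{-t}$ (the coefficient of $x^m$ simplifies to $(-1)^{m-1}\sum_{k=1}^m 1/k=(-1)^{m-1}H_m$), and integrate term by term. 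The interchange is legitimate because $H_m=O(\log m)$, so the integral of the absolute value of the double sum equals $\Gamma(2n)\sum_m H_m/m^{2n}<\infty$ and Tonelli applies (and the boundary $t=0$ causes no difficulty, since $t^{2n-1}$ vanishes there). Termwise integration then yields $\int_0^\infty \dfrac{t^{2n-1}\log(1+e^{-t})}{1+e^{-t}}\,dt=\Gamma(2n)\sum_{m\geq 1}\dfrac{(-1)^{m-1}H_m}{m^{2n}}$ and, after multiplying the series by $e^{-t}$ first, $\int_0^\infty \dfrac{t^{2n-1}\log(1+e^{-t})}{e^t+1}\,dt=\Gamma(2n)\sum_{m\geq 1}\dfrac{(-1)^{m-1}H_m}{(m+1)^{2n}}$.

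The last structural step is to reduce the shifted sum: reindexing $m\mapsto m-1$ and using $H_{m-1}=H_m-1/m$ gives the identity $\sum_{m\geq 1}\dfrac{(-1)^{m-1}H_m}{(m+1)^{2n}}=\eta(2n+1)-\sum_{m\geq 1}\dfrac{(-1)^{m-1}H_m}{m^{2n}}$. Substituting everything back,
\[
I \;=\; \Gamma(2n+1)\eta(2n+1)+\Gamma(2n)\eta(2n+1)-2\,\Gamma(2n)\sum_{m\geq 1}\frac{(-1)^{m-1}H_m}{m^{2n}},
\]
and I would finish by simplifying with $\Gamma(2n+1)=2n\,\Gamma(2n)$, rewriting $-\sum_{m\geq 1}(-1)^{m-1}H_m/m^{2n}=\hsum{2n}$, and substituting $\eta(2n+1)=(1-2^{-2n})\zeta(2n+1)=\dfrac{2^{2n}-1}{2^{2n}}\,\zeta(2n+1)$. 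This turns the display into a single linear equation for $\zeta(2n+1)$, whose solution is exactly the claimed formula.

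I expect the only genuinely delicate points to be the convergence bookkeeping (justifying the termwise integration via Tonelli) and, above all, carrying the shift identity for $\sum(-1)^{m-1}H_m/(m+1)^{2n}$ and then verifying that the accumulated powers of $2$, the Gamma factors, and the factor $2n+1$ collapse precisely to the coefficients $-\dfrac{2^{2n+1}}{(2n+1)(2^{2n}-1)}$ and $\dfrac{2^{2n}}{(2n+1)\Gamma(2n)(2^{2n}-1)}$. (The identical computation with the even exponent $2n-2$ reproves Theorem~\ref{even}: there the substitution $t\mapsto -t$ does not introduce a sign, so the two Euler sums cancel rather than reinforce, which is precisely why no harmonic sum survives in the even case.)
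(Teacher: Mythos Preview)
Your argument is correct and follows the same overall decomposition as the paper (which proves the general Theorem~\ref{general} and then specializes): split at $t=0$, use $\log(1+e^t)=t+\log(1+e^{-t})$ in the positive half, and evaluate the three resulting integrals via the Dirichlet $\eta$-integral and the series $\frac{\log(1+x)}{1+x}=\sum_{m\ge 1}(-1)^{m-1}H_m x^m$. The one tactical difference is how the two harmonic-sum integrals are combined. The paper observes that $\frac{1}{1+e^t}+\frac{1}{1+e^{-t}}=1$, so the \emph{sum} of your integrals $A$ and $B$ equals $\int_0^\infty t^{2n-1}\log(1+e^{-t})\,dt$, which one integration by parts collapses to $\frac{1}{2n}\int_0^\infty\frac{t^{2n}}{1+e^t}\,dt=\Gamma(2n)\eta(2n+1)$; you instead evaluate $A$ and $B$ separately and recover the same relation through the reindexing identity $\sum_{m\ge1}\frac{(-1)^{m-1}H_m}{(m+1)^{2n}}=\eta(2n+1)-\sum_{m\ge1}\frac{(-1)^{m-1}H_m}{m^{2n}}$. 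Both routes yield $I=(2n+1)\Gamma(2n)\,\eta(2n+1)+2\Gamma(2n)\hsum{2n}$ and hence the stated formula. The paper's integration-by-parts step is a touch slicker since it never needs $A$ on its own, while your direct series computation is self-contained and bypasses the citation of the Sofo--Batir result.
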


In fact, we derive the following more general result which gives Theorems~\ref{even} and Theorem~\ref{odd} as special cases.

\begin{theorem}\label{general}
Let $n$ be any natural number greater than $1$.  Then we have
\begin{align}\label{general eqn}
\zeta(n)=\frac{\bigl((-1)^n-1\bigr)}{n(1-2^{1-n})}\hsum{n-1} +\frac{1}{n\Gamma(n-1)(1-2^{1-n})} \int_{-\infty}^\infty \frac{t^{n-2}\log\left(1+e^{t}\right)}{1+e^t}dt.
\end{align}
\end{theorem}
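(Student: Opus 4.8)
The plan is to reduce the integral on the right-hand side to a series that can be recognized as a combination of $\zeta(n)$ and the alternating Euler sum. First I would substitute $u = e^t$, so that $t = \log u$ and the integral over $t\in(-\infty,\infty)$ becomes $\int_0^\infty \frac{(\log u)^{n-2}\log(1+u)}{u(1+u)}\,du$. Splitting the range at $u=1$ and sending $u\mapsto 1/u$ on $(1,\infty)$ should let me fold everything onto $(0,1)$; the logarithmic factor $(\log u)^{n-2}$ picks up a sign $(-1)^{n-2}=(-1)^n$ under this reflection, which is exactly the source of the factor $\bigl((-1)^n-1\bigr)$ in the statement. What remains is to evaluate integrals of the shape $\int_0^1 \frac{(\log u)^{n-2}\log(1+u)}{u}\,du$ and $\int_0^1 \frac{(\log u)^{n-2}\log(1+u)}{1+u}\,du$.

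Next I would expand the logarithm and the geometric-type factors as power series. Using $\log(1+u) = \sum_{k\ge 1}\frac{(-1)^{k-1}}{k}u^k$ together with the standard moment formula $\int_0^1 u^{k-1}(\log u)^{m}\,du = \frac{(-1)^m m!}{k^{m+1}}$, the first integral becomes a constant times $\sum_{k\ge1}\frac{(-1)^{k-1}}{k^{n}}$, i.e. the alternating zeta value $\eta(n) = (1-2^{1-n})\zeta(n)$ — this explains the factor $(1-2^{1-n})$ appearing in every denominator. For the second integral I would expand $\frac{\log(1+u)}{1+u} = \sum_{\ell\ge 1}(-1)^{\ell-1}H_{\ell}\, u^{\ell}$ (the Cauchy product of $\log(1+u)$ with $\frac{1}{1+u}=\sum(-u)^j$, whose coefficients are the alternating harmonic numbers), and then integrate term by term against $(\log u)^{n-2}/u$; this produces precisely $\Gamma(n-1)\,\hsum{n-1}$ up to an explicit constant. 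Assembling the two pieces, dividing by $n\Gamma(n-1)(1-2^{1-n})$, and collecting the reflection sign should yield \eqref{general eqn}.

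The main obstacle is bookkeeping rather than deep input: I must track carefully the combinatorial constants — factorials from the moment formula, powers of $2$ from the $u\mapsto 1/u$ reflection acting on $1/(u(1+u)) = 1/u - 1/(1+u)$, and the sign conventions — so that the $\zeta(n)$ contributions coming from $\eta(n)$ on the two halves of the range combine correctly with the Euler-sum term. A secondary technical point is justifying the interchange of summation and integration near $u=1$ (where $\log(1+u)$ is bounded and $(\log u)^{n-2}/u$ is integrable, so dominated convergence applies) and confirming convergence of $\hsum{n-1}$ for $n\ge 2$ as already noted in \eqref{halphas}. Once the general identity is in hand, Theorem~\ref{even} follows by setting $n\mapsto 2n$ (the factor $(-1)^n-1$ vanishes, killing the Euler sum) and Theorem~\ref{odd} by setting $n\mapsto 2n+1$ (where $(-1)^n-1=-2$), after rewriting $1-2^{1-n}$ in the form displayed there.
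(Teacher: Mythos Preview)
Your outline is close in spirit to the paper's argument and would work, but there is one concrete omission. Under the reflection $u\mapsto 1/u$ on $(1,\infty)$, it is not only $(\log u)^{n-2}$ that changes: the factor $\log(1+u)$ becomes $\log(1+1/u)=\log(1+u)-\log u$. The extra $-\log u$ produces a \emph{third} integral
\[
\int_0^1\frac{(\log u)^{n-1}}{1+u}\,du=(-1)^{n-1}(n-1)!\,\eta(n)
\]
beyond the two shapes you list, and it is this term that supplies almost all of the $\eta(n)=(1-2^{1-n})\zeta(n)$ contribution --- hence the factor $n$ in the denominator of~\eqref{general eqn}. Without it the bookkeeping yields $\Gamma(n-1)\eta(n)$ in place of $n\,\Gamma(n-1)\eta(n)$, so the final constant is wrong. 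A related slip: integrating the Cauchy-product series $\frac{\log(1+u)}{1+u}=\sum_{\ell\ge 1}(-1)^{\ell-1}H_\ell\,u^\ell$ against $(\log u)^{n-2}$ (there is no extra $1/u$ in your second integral) gives $\sum_{\ell}(-1)^{\ell-1}H_\ell/(\ell+1)^{n-1}$, which after the shift $\ell\mapsto\ell-1$ equals $\hsum{n-1}+\eta(n)$ rather than the Euler sum alone. Finally, no ``powers of $2$'' arise from the reflection itself; the only $2$-power in the problem is the one inside $\eta(n)$.

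For comparison, the paper's proof packages the same ingredients slightly differently: it splits at $t=0$ and writes $1+e^{t}=e^{t}(1+e^{-t})$ (which is exactly your missing identity $\log(1+u)=\log u+\log(1+1/u)$), isolating the piece $I_4(n)=\int_0^\infty \frac{t^{n+1}}{1+e^t}\,dt$ that carries the main $\eta$ term; it then combines two of the remaining pieces with a single integration by parts rather than evaluating each by series, and it quotes Sofo--Batir for the Euler-sum integral instead of deriving it via the Cauchy product. Once you restore the $-\log u$ contribution, either organization leads to~\eqref{general eqn}.
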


Harmonic sums, such as the one occurring here, have been intensively studied.  The original Euler formula reads (cf. \cite[p. 252]{berndt1985}):
\[
2\sum_{\ell=1}^{\infty}\dfrac{H_{\ell}}{\ell^{m}}=(m+2)\zeta(m+1) - \sum_{k=1}^{m-2}\zeta(m-k)\zeta(k+1),\qquad m\geq 2.
\]
The literature on the subject post-Euler is abundant; one can mention the work of Srivastava on zeta and $q$-zeta functions and associated series and integrals \cite{srivastava2001,srivastava2012}, the contribution of Batir to combinatorial identities and harmonic sums \cite{batir2017}, and the difficult integrals, sums and series collected by V\v{a}lean \cite{valean2019}.  As concerns the closely related topic of polylogarithmic integrals, Sofo obtained some Bailey-Borwein-Plouffe-type series \cite{sofo2021b}, and Mez\H o studied a family of polylog-trigonometric integrals \cite{mezo2018}.

In our case we require the sum $\hsum{n-1}$, but as this occurs only when $n$ is odd, we actually only need $\hsum{2m}$.   This sum has been determined previously, in 1987 by Sitaramachandrarao~\cite[3.17]{sitaramachandrarao1987}, who denoted it $-H_7(2m)$, and in 1998 by Flajolet and Salvy~\cite[Theorem 7.1(ii)]{flajolet1998}), who denoted it $-S^{+-}_{1,2m}$ (Warning: both references refer to a function denoted $\overline{\zeta}$, whose meaning is different in each paper).  Their result can be written in the form
\begin{equation}\label{eq-H-yuck}
\hsum{2m} = \biggl[m -\frac{2m+1}{2^{2m+1}}\biggr] \zeta(2m+1) -\sum_{k=1}^{m-1} (1-2^{1-2k} )\zeta(2k)\zeta(2m-2k+1),
\end{equation}
which can be used in~(\ref{general eqn}) to express the integral purely in terms of zeta functions; see~(\ref{eq-odd-initial}) below for details.

The proofs of Theorems \ref{even}, \ref{odd} and \ref{general} are provided in section \ref{proofs}.  Observe that the integrals occurring above are the moments of $\log(1+e^x)/(1+e^x)$.  By taking appropriate linear combinations of these, we can obtain an integral representation of $\zeta(2n+1)$. Curiously the required integrand turns out to be the product of $\log(1+e^x)/(1+e^x)$ with a polynomial related to the tangent function.

\begin{theorem}\label{withQn}
Let $n$ be a natural number. Then we have
\begin{equation*}
\zeta{(2n+1)}= \frac{1}{(2n-1)!}\,\pi^{2n-1}\!\! \int_{-\infty}^{\,\infty} \!\! Q_n\!\left(\frac{x}{\pi}\right) \frac{\log\bigl(1+e^x\bigr)}{1+e^x}\, dx.
\end{equation*}
where the monic polynomial $Q_m\in \Q[x]$ of degree $2m-1$ is defined as
\begin{equation}\label{eq-Q}
Q_m(t) = \frac{(-1)^m}{2m}\sum\limits_{k=1}^m (-1)^{k}\binom{2m}{2k-1}t^{2k-1}.
\end{equation}
The first few $Q_m$ are $Q_1(t)=t$, $Q_2(t)=t^3-t$, $Q_3(t) = \frac{1}{3}(3t^5-10t^3+3t)$.
\end{theorem}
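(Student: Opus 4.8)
The plan is to feed the evaluation~(\ref{eq-H-yuck}) of the alternating Euler sum into Theorem~\ref{odd}, extract a clean recursion for the moments $I_{m}:=\int_{-\infty}^{\infty}t^{m}\,\frac{\log(1+e^{t})}{1+e^{t}}\,dt$, solve it, and recognize the solution as $Q_{n}$. Write $\eta(s):=(1-2^{1-s})\zeta(s)$ for the Dirichlet eta function; by~(\ref{eq-even-zeta-Euler}), $\eta(2k)/\pi^{2k}\in\Q$ for every $k\ge 1$. Substituting~(\ref{eq-H-yuck}) into Theorem~\ref{odd} (cf.~(\ref{eq-odd-initial})), a short computation shows that the several contributions to $\zeta(2j+1)$ conspire so that its coefficient is exactly $1$, leaving the triangular recursion
\[
\frac{I_{2j-1}}{(2j-1)!}=\zeta(2j+1)+2\sum_{k=1}^{j-1}\eta(2k)\,\zeta\bigl(2(j-k)+1\bigr),\qquad j\ge 1,
\]
or equivalently, solving for the odd zeta value,
\[
\zeta(2n+1)=\frac{I_{2n-1}}{(2n-1)!}-2\sum_{k=1}^{n-1}\eta(2k)\,\zeta\bigl(2(n-k)+1\bigr).
\]
I then prove Theorem~\ref{withQn} by induction on $n$.

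For $n=1$ the recursion reads $\zeta(3)=I_{1}$, which is the assertion since $Q_{1}(t)=t$. Assume the theorem holds for $1,\dots,n-1$. In the second display substitute the inductive representation of each $\zeta\bigl(2(n-k)+1\bigr)$ ($1\le k\le n-1$), write $I_{2n-1}=\pi^{2n-1}\!\int_{-\infty}^{\infty}(x/\pi)^{2n-1}\frac{\log(1+e^{x})}{1+e^{x}}\,dx$, and factor out $\pi^{2n-1}$ (using $\eta(2k)\,\pi^{2(n-k)-1}=(\eta(2k)/\pi^{2k})\,\pi^{2n-1}$). The claimed formula for $\zeta(2n+1)$ then becomes equivalent to the purely algebraic identity in the single variable $t$,
\[
\frac{Q_{n}(t)}{(2n-1)!}+2\sum_{k=1}^{n-1}\frac{\eta(2k)}{\pi^{2k}}\cdot\frac{Q_{n-k}(t)}{\bigl(2(n-k)-1\bigr)!}=\frac{t^{2n-1}}{(2n-1)!},
\]
and establishing this identity for all $n\ge 1$ is the heart of the proof.

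I would verify it by passing to generating functions in an auxiliary variable $z$. First, inserting $\binom{2m}{2k-1}/(2m)!=1/\bigl((2k-1)!\,(2m-2k+1)!\bigr)$ into the definition~(\ref{eq-Q}) of $Q_{m}$ and reindexing the resulting double sum by $(k,m)\mapsto(k,\,m-k)$, one obtains the factorization
\[
\sum_{m\ge 1}\frac{Q_{m}(t)}{(2m-1)!}\,z^{2m-1}=\left(\sum_{j\ge 0}\frac{(-1)^{j}z^{2j}}{(2j+1)!}\right)\left(\sum_{k\ge 1}\frac{(tz)^{2k-1}}{(2k-1)!}\right)=\frac{\sin z}{z}\,\sinh(tz).
\]
Second, starting from~(\ref{eq-even-zeta-Euler}) in the form $\sum_{k\ge 1}\zeta(2k)w^{2k}=\tfrac{1}{2}\bigl(1-\pi w\cot\pi w\bigr)$, writing $\eta(2k)=\zeta(2k)-2\,\zeta(2k)\,2^{-2k}$, and using $\cot\tfrac{u}{2}-\cot u=\csc u$, one gets the cosecant expansion
\[
1+2\sum_{k\ge 1}\frac{\eta(2k)}{\pi^{2k}}\,z^{2k}=\frac{z}{\sin z}.
\]
Now multiply the algebraic identity above by $z^{2n-1}$ and sum over $n\ge 1$; the inner sum over $k$ is a Cauchy product, so the left-hand side collapses to
\[
\left(1+2\sum_{k\ge 1}\frac{\eta(2k)}{\pi^{2k}}\,z^{2k}\right)\frac{\sin z}{z}\,\sinh(tz)=\frac{z}{\sin z}\cdot\frac{\sin z}{z}\,\sinh(tz)=\sinh(tz)=\sum_{n\ge 1}\frac{(tz)^{2n-1}}{(2n-1)!},
\]
which is exactly the generating function of the right-hand side. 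Comparing coefficients of $z^{2n-1}$ completes the induction.

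Two points carry the weight. The first is the arithmetic ``collapse'' in the first paragraph: it is only because the constants coming from Theorems~\ref{even}, \ref{odd} and~\ref{general} and from~(\ref{eq-H-yuck}) combine to leave $\zeta(2j+1)$ with coefficient precisely $1$ that one obtains the simple triangular recursion above, and one must keep careful track of signs (with~(\ref{eq-H-yuck}) taken in the normalization of~(\ref{halphas}), so that $\hsum{2}=-\tfrac{5}{8}\zeta(3)$). The second is the recognition that $Q_{m}$, as defined in~(\ref{eq-Q}), is precisely the monic polynomial whose generating function is $(\sin z)\sinh(tz)/z$ — this is the ``polynomial related to the tangent function'' announced in the Introduction — so that, combined with the classical cosecant expansion, the factors $z/\sin z$ and $(\sin z)/z$ cancel and the identity trivializes. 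Everything else (convergence of the integrals $I_{m}$, the interchange of finite sums, the reindexing) is routine.
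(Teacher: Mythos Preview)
Your argument is correct and reaches the same destination as the paper, but by a genuinely different route once the triangular system is in hand. Both proofs begin by feeding~(\ref{eq-H-yuck}) into Theorem~\ref{odd} to obtain the lower-triangular relation $I(2j-1)/(2j-1)! = \zeta(2j+1) + 2\sum_{k=1}^{j-1}\eta(2k)\,\zeta(2(j-k)+1)$ (this is exactly the paper's~(\ref{eq-odd-initial}) rewritten via $(2-2^{2-2\ell})\zeta(2\ell)=2\eta(2\ell)$ and $\eta(0)=\tfrac12$). From here the paper \emph{guesses} the inverse of the coefficient matrix, namely $b_{j,k}=(-1)^{j+k}\pi^{2(j-k)}\big/\bigl[(2j-2k+1)!\,(2k-1)!\bigr]$, and verifies $\sum_\ell a_{j,\ell}b_{\ell,k}=\kron{j}{k}$ by the Bernoulli identity $\sum_{i=0}^m(2^{2i}-2)\binom{2m+1}{2i}B_{2i}=-\kron{m}{0}$ (Lemma~\ref{Lem-BernoulliNumbers}(b)). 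You instead package the same inversion as a generating-function cancellation: the coefficients $2\eta(2k)/\pi^{2k}$ assemble into $z/\sin z$ (cosecant expansion), while the definition~(\ref{eq-Q}) yields $\sum_{m\ge1}\frac{Q_m(t)}{(2m-1)!}z^{2m-1}=\frac{\sin z}{z}\sinh(tz)$, so the needed identity collapses to $(z/\sin z)\cdot(\sin z)/z=1$. Your approach bypasses the Bernoulli-number lemma entirely and makes the ``polynomial related to the tangent function'' remark in the introduction visible at the level of the proof; the paper's approach has the compensating virtue of exhibiting the explicit inverse-matrix entries $b_{j,k}$, which give $\zeta(2n+1)$ directly as a finite linear combination of the $I(2k-1)$ without an inductive wrapper. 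One small wording point: you say the theorem ``becomes equivalent to'' the polynomial identity, but you only need (and only use) the easy implication that the polynomial identity \emph{implies} the integral one.
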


An explicit calculation (see Lemma~\ref{Lem-Qnroots}) reveals that the zeros of $Q_m$ are all of the form $\tan(k\pi/2m)$.  We thus obtain the next corollary.
\begin{corollary}\label{cor_zeta_odd}
Let $n$ be a natural number. At odd arguments we have the following integral representation of zeta:
\begin{equation}\label{eq-zeta-odd}
\zeta(2n+1) = \frac{1}{(2n-1)!} \int_{\!-\infty}^{\,\infty} \prod_{k=1-n}^{n-1} \biggl[x-\pi\tan\Bigl(\frac{k\pi}{2n}\Bigr)\biggr]\; \frac{\log(1+e^x)}{1+e^x}\,dx.
\end{equation}
\end{corollary}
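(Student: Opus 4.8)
The plan is to deduce Corollary~\ref{cor_zeta_odd} directly from Theorem~\ref{withQn} by splitting the polynomial $Q_n$ into linear factors. By Lemma~\ref{Lem-Qnroots} every zero of $Q_n$ has the form $\tan(k\pi/2n)$, so the first task is to pin down exactly which values of $k$ occur. Since $Q_n$ has degree $2n-1$ we need $2n-1$ distinct zeros; the natural candidates are $k\in\{1-n,2-n,\dots,n-1\}$, and for these indices one has $|k|\pi/(2n)<\pi/2$, so the arguments lie in the open interval $(-\pi/2,\pi/2)$ on which $\tan$ is injective and finite. Hence $\tan(k\pi/2n)$, $k=1-n,\dots,n-1$, are $2n-1$ pairwise distinct real numbers, which, together with $Q_n$ being monic of degree $2n-1$, forces
\begin{equation*}
Q_n(t)=\prod_{k=1-n}^{n-1}\Bigl(t-\tan\Bigl(\frac{k\pi}{2n}\Bigr)\Bigr).
\end{equation*}
(That the index range stops at $n-1$ rather than $n$ is no accident: $k=\pm n$ would give the pole $\tan(\pm\pi/2)$, and the symmetry of this index set about $0$, including the zero $0=\tan 0$ at $k=0$, is consistent with $Q_n$ being an odd polynomial.)

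Next I would substitute $t=x/\pi$ into this factorization, obtaining
\begin{equation*}
Q_n\!\left(\frac{x}{\pi}\right)=\prod_{k=1-n}^{n-1}\Bigl(\frac{x}{\pi}-\tan\Bigl(\frac{k\pi}{2n}\Bigr)\Bigr)=\pi^{-(2n-1)}\prod_{k=1-n}^{n-1}\Bigl(x-\pi\tan\Bigl(\frac{k\pi}{2n}\Bigr)\Bigr),
\end{equation*}
since the product has exactly $2n-1$ factors. Inserting this into the formula of Theorem~\ref{withQn} makes the factor $\pi^{2n-1}$ there cancel the $\pi^{-(2n-1)}$, and~(\ref{eq-zeta-odd}) drops out immediately.

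The argument is essentially bookkeeping: all of the genuine work is already contained in Theorem~\ref{withQn} and in Lemma~\ref{Lem-Qnroots}. Accordingly the only point that needs real care is the identification of the index set $\{1-n,\dots,n-1\}$ together with the verification that it yields no repeated roots; a quick check against $Q_1(t)=t$, $Q_2(t)=t(t-1)(t+1)$ and $Q_3(t)=t(t^2-1/3)(t^2-3)$, whose roots are $\tan(k\pi/2n)$ for $k=1-n,\dots,n-1$ with $n=1,2,3$, confirms this is the right range. If one preferred not to rely on the precise statement of Lemma~\ref{Lem-Qnroots}, the same conclusion could be reached by proving the displayed factorization directly: both sides are monic of degree $2n-1$, so it suffices to show that each $\tan(k\pi/2n)$ is a root of $Q_n$, which is in any case the content of that lemma.
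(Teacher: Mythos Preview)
Your proposal is correct and follows essentially the same route as the paper: use Lemma~\ref{Lem-Qnroots} to identify the $2n-1$ roots $\tan(k\pi/2n)$ for $1-n\le k\le n-1$, note that $Q_n$ is monic of degree $2n-1$, factor accordingly, and let the $\pi^{2n-1}$ cancel after substituting $t=x/\pi$. Your explicit justification that these $2n-1$ tangent values are pairwise distinct (via injectivity of $\tan$ on $(-\pi/2,\pi/2)$) is a welcome detail that the paper leaves implicit.
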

\vspace{5mm}
%]
The proofs of Theorem \ref{withQn} and Corollary \ref{cor_zeta_odd} are given in the next section as well.

\section{Proofs}\label{proofs}

\begin{proof}[Theorem \ref{general}][]

We observe that
\begin{align}
I(n) := \int_{-\infty}^\infty \frac{t^{n}\log\left(1+e^{t}\right)}{1+e^t}dt&=I_1(n)+(-1)^nI_2(n),
\end{align}
where
\begin{equation}
I_1(n):=\int_{0}^\infty \frac{t^{n}\log\left(1+e^{t}\right)}{1+e^t}dt,\qquad
I_2(n):=\int_{0}^\infty \frac{t^{n}\log\left(1+e^{-t}\right)}{1+e^{-t}}dt.
\end{equation}
By writing $1+e^t$ as $e^t(1+e^{-t})$, the integral $I_1(n)$ can be written as $I_1(n)=I_3(n)+I_4(n)$, where
\begin{equation}
I_3(n) := \int_{0}^\infty \frac{t^{n}\log\left(1+e^{-t}\right)}{1+e^{t}}dt,\qquad I_4(n) :=\int_{0}^\infty \frac{t^{n+1}}{1+e^{t}}dt.
\end{equation}
Thus
\begin{equation}
I(n) = \Bigl(I_2(n) + I_3(n)\Bigr) + I_4(n) +\Bigl((-1)^n-1\Bigr)I_2(n).
\end{equation}
We now perform integration by parts on $I_2(n)+I_3(n)$. Since $\frac{1}{1+x \vphantom{\int}} + \frac{1}{1+x^{-1} \vphantom{\int}} = 1$,
\begin{eqnarray*}
I_2(n)+I_3(n) &=& \int_0^{\infty} t^n\log(1+e^{-t})\, dt \\
&=& %
\frac{1}{n+1} \int_0^{\infty} \log(1+e^{-t})\, d\, (t^{n+1}) \\ &=& %
\frac{1}{n+1}\Bigl[t^{n+1}\log(1+e^{-t})\Bigr]_{0}^{\infty} \quad -\quad \frac{1}{n+1} \int_0^{\infty} \frac{(-e^{-t})}{1+e^{-t}}\, t^{n+1}\, dt \\ &=& %
\frac{1}{n+1}\lim\limits_{t\to \infty} \Bigl[t^{n+1}\log(1+e^{-t})\Bigr]\quad +\quad\frac{1}{n+1} \int_0^{\infty} \frac{t^{n+1}}{1+e^{t}}\, dt \\ &=& %
\lim\limits_{t\to \infty} \Bigl[t^{n+1}\log(1+e^{-t})\Bigr]\quad +\quad\frac{1}{n+1} I_4(n).
\end{eqnarray*}
Now $\log(1+x)<x$ so for $t>0$ we have $0<t^{n+1}\log(1+e^{-t}) < e^{-t}\,t^{n+1} \to 0$ as $t \to \infty$. Thus
\begin{equation}\label{eq-InSimplified}
I(n) = \frac{n+2}{n+1}I_4(n) +\Bigl((-1)^n-1\Bigr)I_2(n).
\end{equation}

We obtain $I_4(n)$ immediately upon recalling the Dirichlet $\eta$ function (alternating $\zeta$ function) defined by \cite{spiegel1968}:
\begin{equation}
\eta(s) = \frac{1}{\Gamma(s)}\int_0^{\infty} \frac{x^{s-1}}{1+e^x}\,dx
\end{equation}
and satisfying
\begin{equation}\label{eq-eta}
\eta(s) =\sum_{\ell=1}^{\infty} \frac{(-1)^{\ell+1}}{\ell^s} = (1-2^{1-s})\zeta(s).
\end{equation}
Interestingly, the Dirichlet $\eta$ function also appears in mathematical physics as the integral of a Fermi--Dirac distribution (see \cite[7.5]{dougall1938}).  

We obtain
\begin{equation}\label{eq-i4}
I_4(n) = \Gamma(n+2)\eta(n+2) = (1-2^{-1-n})\zeta(n+2)\Gamma(n+2).
\end{equation}
Making the change of variable $u=e^{-t}$ in $I_2(n)$, we get
\begin{equation}
I_2(n) =(-1)^n\int_0^1 \frac{ \log^n(u)\log(1+u)}{u(1+u)}\, du.
\end{equation}
We now obtain $I_2(n)$ from the following, which is a special case of a result due to Sofo and Batir \cite[p.~409, Theorem 3.5]{sofo2021}. Namely, for $b\geq-2$, and $n \geq 0$ an integer,
\begin{align}\label{anba2}
\int_0^1\frac{x^b\log^n(x)\log(1+x)}{1+x}dx=(-1)^n n!\sum_{\ell=1}^\infty\frac{(-1)^{\ell+1}H_\ell}{(\ell+1+b)^{n+1}}.
\end{align}
Putting $b=-1$ in~(\ref{anba2}) gives
\begin{equation}\label{eq-i2}
I_2(n) = (-1)^n\!\int_0^1\frac{\log^n(x)\log(1+x)}{x(1+x)}\, dx = -\Gamma(n+1)\hsum{n+1}.
\end{equation}
We note in passing that~(\ref{anba2}) substituting $u=e^{-t}$ allows us to find $I_3(n)$ as well (although we do not need it).  Namely, putting $b=0$ in~(\ref{anba2}) and simplifying yields
$$
I_3(n) = \Gamma(n+1)\hsum{n+1} +\Gamma(n+1)(1-2^{-1-n})\zeta(n+2).
$$
Finally equations~(\ref{eq-InSimplified}), (\ref{eq-i4}) and~(\ref{eq-i2}) give
\begin{equation}
I(n) = (n+2)(1-2^{-1-n})\Gamma(n+1)\zeta(n+2) -\bigl((-1)^n-1\bigr)\Gamma(n+1)\hsum{n+1}.
\end{equation}
Replacing $n$ by $n-2$ and solving for $\zeta(n)$ gives the result of Theorem~\ref{general}.
\end{proof}
\vspace{5mm}
\begin{proof}[Theorem \ref{withQn}][]

Before commencing the calculation, we collect some facts about Bernoulli numbers.  Recall that
\begin{equation}
\sum_{s=0}^n \binom{n+1}{s}B_s = \kron{n}{0} \label{eq-BernRecur}
\end{equation}
where $\kron{p}{q}$ denotes Kronecker delta, and
\begin{equation}
\Bpol{n}(1-x)=(-1)^n\,\Bpol{n}(x) \label{eq-BernoulliPoly}
\end{equation}
where $\Bpol{n}$ denotes the $n$th Bernoulli polynomial, defined in~(\ref{eq-BernPolyDef}). We then have:
\begin{lemma}\label{Lem-BernoulliNumbers}\ 
\begin{enumerate}
\item $\displaystyle2\sum\limits_{i=0}^m\binom{2m+1}{2i}B_{2i}=(2m+1) +\kron{m}{0}.$
\item $\displaystyle\sum\limits_{i=0}^{m} (2^{2i}-2)\binom{2m+1}{2i}B_{2i} =-\kron{m}{0}$.
\end{enumerate}
\end{lemma}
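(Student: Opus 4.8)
The plan is to deduce both parts from the single Bernoulli recurrence~(\ref{eq-BernRecur}), with one extra ingredient needed for part~(b): the fact that the exponential generating function $\dfrac{2te^{t}}{e^{2t}-1}=\dfrac{t}{\sinh t}$ is an \emph{even} function of $t$.

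For part~(a), I would set $n=2m$ in~(\ref{eq-BernRecur}) to get $\sum_{s=0}^{2m}\binom{2m+1}{s}B_s=\kron{2m}{0}=\kron{m}{0}$, and then split this sum according to the parity of $s$. Since $B_{2i+1}=0$ for every $i\ge1$ and $B_1=-\tfrac12$, the odd-index part reduces --- for $m\ge1$ --- to the single term $\binom{2m+1}{1}B_1=-\tfrac{2m+1}{2}$; moving it across gives $2\sum_{i=0}^{m}\binom{2m+1}{2i}B_{2i}=2m+1$, which is~(a). The degenerate case $m=0$ (where the claim reads $2B_0=2$) is checked by hand.

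For part~(b), I would first write
$$\sum_{i=0}^{m}(2^{2i}-2)\binom{2m+1}{2i}B_{2i}=\sum_{i=0}^{m}2^{2i}\binom{2m+1}{2i}B_{2i}-2\sum_{i=0}^{m}\binom{2m+1}{2i}B_{2i},$$
the second sum being $(2m+1)+\kron{m}{0}$ by part~(a). For the first sum, I would read off the coefficient of $t^{2m+1}/(2m+1)!$ in $\bigl(\sum_{s\ge0}B_s\frac{(2t)^s}{s!}\bigr)e^{t}=\frac{2te^{t}}{e^{2t}-1}=\frac{t}{\sinh t}$: this coefficient is $\sum_{s=0}^{2m+1}\binom{2m+1}{s}2^{s}B_{s}$, and since $\frac{t}{\sinh t}$ is even it equals $0$. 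Splitting once more by parity and using $B_1=-\tfrac12$, $B_{2i+1}=0$ ($i\ge1$), the odd part equals $\binom{2m+1}{1}2B_1=-(2m+1)$, hence $\sum_{i=0}^{m}2^{2i}\binom{2m+1}{2i}B_{2i}=2m+1$. Putting the two pieces together yields $(2m+1)-\bigl[(2m+1)+\kron{m}{0}\bigr]=-\kron{m}{0}$, as claimed. (Equivalently, one may recognise $\sum_{s=0}^{2m+1}\binom{2m+1}{s}2^{s}B_{s}=2^{2m+1}\Bpol{2m+1}(\tfrac12)$ and invoke $\Bpol{n}(\tfrac12)=(2^{1-n}-1)B_n$, which is itself a consequence of~(\ref{eq-BernoulliPoly}) together with the duplication formula for Bernoulli polynomials.)

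There is no real obstacle here: both identities are routine consequences of standard Bernoulli-number facts. The only points demanding care are the separate bookkeeping of the $s=1$ term (the lone surviving odd Bernoulli number, $B_1=-\tfrac12$) and the edge case $m=0$. The one observation that makes part~(b) effortless is that $\frac{2te^{t}}{e^{2t}-1}$ collapses to the even function $\frac{t}{\sinh t}$, so all of its odd Taylor coefficients vanish, which is exactly what pins down $\sum_{i}2^{2i}\binom{2m+1}{2i}B_{2i}$.
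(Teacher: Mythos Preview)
Your proof is correct and follows essentially the same route as the paper: part~(a) is identical (split the recursion~(\ref{eq-BernRecur}) at $n=2m$ by parity, isolate the lone odd term $B_1$, handle $m=0$ separately), and part~(b) in both arguments reduces to establishing $\sum_{i=0}^{m}2^{2i}\binom{2m+1}{2i}B_{2i}=2m+1$ and then subtracting~(a). The only cosmetic difference is how that auxiliary identity is obtained: the paper reads it off from $\Bpol{2m+1}(\tfrac12)=0$ via the symmetry~(\ref{eq-BernoulliPoly}), whereas you read it off from the vanishing odd Taylor coefficients of $\dfrac{2te^{t}}{e^{2t}-1}=\dfrac{t}{\sinh t}$ --- and you yourself note in your parenthetical remark that these two justifications are equivalent.
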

\begin{proof} (a) For $n>0$ the basic recursion~(\ref{eq-BernRecur}) gives
\begin{equation*}
0 \;\;=\;\;\textstyle \sum\limits_{j=0}^n \binom{n+1}{j}B_j \;\;=\;\; %
\textstyle-\frac{1}{2}\binom{n+1}{1} + \sum\limits_{i=0}^{\lfloor n/2\rfloor} \binom{n+1}{2i}B_{2i}
\end{equation*}
\begin{equation*}
\textstyle\therefore\quad \sum\limits_{i=0}^{\lfloor n/2\rfloor} \binom{n+1}{2i}B_{2i} = \textstyle \frac{n+1}{2} \quad
\therefore\quad \textstyle 2\sum\limits_{i=0}^m\binom{2m+1}{2i}B_{2i} =2m+1\qquad m>0.
\end{equation*}
Allowing for the $m=0$ case, this gives~(a).

(b) Suppose $n$ is odd, say $n=2m+1$. Then~(\ref{eq-BernoulliPoly}) implies $B_n(\frac{1}{2})=0$, and hence
\begin{equation*}
\renewcommand{\arraystretch}{2.2}\begin{array}{rclclcl}
\textstyle 0 &=& \textstyle 2^n\,\Bpol{n}(\frac{1}{2}) %
&=& \textstyle \sum\limits_{k=0}^n \binom{n}{k}B_{n-k}2^{n-k} %
&=& \textstyle \sum\limits_{k=0}^n \binom{n}{n-k}B_{n-k}2^{n-k} \\ %
&=& \textstyle \sum\limits_{j=0}^n \binom{n}{j}2^{j}B_{j} %
&=&\textstyle \sum\limits_{j=0}^{2m+1} \binom{2m+1}{j}2^{j}B_{j} %
&=&\textstyle \sum\limits_{i=0}^{m} \binom{2m+1}{2i}2^{2i}B_{2i} + 2\binom{2m+1}{1}B_{1} \\ %
\therefore\quad 2m+1 &=&\textstyle \sum\limits_{i=0}^{m} \binom{2m+1}{2i}2^{2i}B_{2i}.\hspace*{-40mm}
\end{array}
\end{equation*}
Now (b) follows by subtracting (a) from the last equation.
\end{proof}

\vspace{0.5cm}

We now obtain our expression for $\zeta$ at odd arguments. First we substitute the explicit form of $\hsum{n}$ from equation~(\ref{eq-H-yuck}) into Theorem~\ref{odd} to obtain %[more details needed?]
\begin{equation}\label{eq-odd-initial}
I(n)= n!\,\sum\limits_{\ell =0}^{\frac{n-1}{2}} \bigl(2-2^{2-2\ell}\bigr)\, \zeta(n+2-2\ell)\, \zeta(2\ell), \qquad n\text{\ odd}
\end{equation}
where we absorbed the $\zeta(n+2)$ term into the sum using the fact that $\zeta(0)=-\frac{1}{2}$. Now we simply view the zeta terms at odd arguments as unknowns, and solve for them using linear algebra. To make this clearer, let $n=2j-1$ and $k=j-\ell$. Then~(\ref{eq-odd-initial}) is effectively a system of linear equations
\begin{equation}\label{eq-odd}
\sum\limits_{k=1}^{j} a_{j,k}\,\zeta(2k+1) =I(2j-1)\quad \text{where}\quad a_{j,k}= (2j-1)!\, \bigl(2-2^{2-2j+2k}\bigr)\, \zeta(2j-2k).
\end{equation}

Let $\AAA_n$ be the $n\times n$ lower triangular matrix with non-zero entries $a_{j,k}$ for $1 \leq k \leq j \leq n$ defined by~(\ref{eq-odd}): %
\begin{equation*}
\AAA_n = \left(\begin{array}{ccccc}
1 & 0 & 0 & 0 & \cdots \\ %
6\,\zeta(2) & 3! & 0 & 0 & \cdots \\ %
210\,\zeta(4) & 120\,\zeta(2) & \quad 5!\quad & 0 & \cdots \\
\vdots & \vdots & \vdots & 7!
& \ddots % 
\end{array}\right).
\end{equation*}
Form column vectors $\JJ_n=\bigl(I(1), I(3), \ldots, I(2n-1)\bigr)\trans$ and $\ZZ_n=\bigl(\zeta(3), \zeta(5), \ldots, \zeta(2n+1)\bigr)\trans$. From~(\ref{eq-odd}) we have $\JJ_n = \AAA_n\, \ZZ_n$, so
\begin{equation} \label{eq-zeta-inverse-matrix}
\ZZ_n = \AAA_n^{-1}\,\JJ_n.
\end{equation}
Consider the lower triangular matrix $\AAA_n^{-1}$. We shall show that the non-zero entries\footnote{Because each lower triangular $\AAA_n$ embeds in the next, the entries of each $\AAA_n^{-1}$ are the same for all $n$ for which they are defined.} are
\begin{equation*}
b_{j,k} = \frac{(-1)^{j+k}\, \pi^{2(j-k)}}{\vphantom{\int} (2j-2k+1)!\; (2k-1)!}\qquad \qquad (k\leq j).
\end{equation*}
To establish this claim it suffices to show\footnote{Recall $CD=I \implies DC=I$ automatically for square matrices.} $\sum\limits_{\ell=k}^j a_{j,\ell}\, b_{\ell,k} =\kron{j}{k}$ for each $(j,k)$ with $1\leq k\leq j \leq n$. Using~(\ref{eq-even-zeta-Euler}) we have
\begin{equation*}
\renewcommand{\arraystretch}{3.4}\begin{array}{rcl} %
\displaystyle \sum\limits_{\ell=k}^j a_{j,\ell}\, b_{\ell,k} \!\!\!\!\! &\!\!=\!\!&\!\!\! \displaystyle % 
\sum\limits_{\ell=k}^j (2j-1)!(2-2^{2-2j+2\ell})\zeta\bigl(2(j-\ell)\bigr) \;\;\cdot\;\; \frac{(-1)^{\ell+k} \pi^{2(\ell-k)}}{\vphantom{\int} (2\ell-2k+1)!\; (2k-1)!} \\ & \stackrel{(\ref{eq-even-zeta-Euler})}{\!\!=\!\!} &\!\!\! \displaystyle %
\sum\limits_{\ell=k}^j \frac{ (2j-1)!\, (2-2^{2-2j+2\ell})\; %
(-1)^{j-\ell+1}B_{2(j-\ell)}\, 2^{2j-2\ell}\, \pi^{2j-2\ell} \; (-1)^{\ell+k}\, \pi^{2\ell-2k}} {2\, (2j-2\ell)!\; (2\ell-2k+1)!\; (2k-1)!} \\ \!\!\!\!\! &\!\!=\!\!&\!\!\! \displaystyle %
(-1)^{j+k+1} \pi^{2j-2k} \frac{(2j-1)!}{(2k-1)!} \sum\limits_{\ell=k}^j \frac{(2^{2j-2\ell}-2)} {(2j-2\ell)!\; (2\ell-2k+1)!} B_{2(j-\ell)}.
\end{array}
\end{equation*}
Put $m=j-k\geq 0$ and $i=j-\ell$. The right-hand side becomes
\begin{equation*}
(-1)^{m+1}\pi^{2m} \frac{(2j-1)!} {\vphantom{\int} (2k-1)!\, \;(2m+1)!}\sum\limits_{i=0}^{m} (2^{2i} -2)\binom{2m+1}{2i}B_{2i} \;\;\stackrel{(\ref{Lem-BernoulliNumbers})(b)}{=}\;\; \kron{j}{k}
\end{equation*}
as required.

The bottom row of~(\ref{eq-zeta-inverse-matrix}) thus gives 
\begin{equation*}
\renewcommand{\arraystretch}{2.2}\begin{array}{rcl}
\zeta(2n+1) &=& \displaystyle %
\sum_{k=1}^n \frac{(-1)^{n+k} \pi^{2(n-k)}}{\vphantom{\int} (2n-2k+1)!\; (2k-1)!}\,I(2k-1) \\ &=& \displaystyle %
\int_{-\infty}^{\,\infty} \sum\limits_{k=1}^n \frac{(-1)^{n+k}} {\vphantom{\int} (2n-2k+1)!\; (2k-1)!}\; \pi^{2(n-k)} x^{2k-1} \frac{\log(1+e^x)} {1+e^x}\, dx \\ &=& \displaystyle %
\frac{1}{(2n-1)!}\frac{(-1)^n \pi^{2n-1}}{2n} \int_{-\infty}^{\,\infty} \sum\limits_{k=1}^n (-1)^{k}\binom{2n}{2k-1} \left(\frac{x}{\pi}\right)^{\!2k-1} \frac{\log(1+e^x)}{1+e^x}\, dx \\ &=& \displaystyle %
\frac{1}{(2n-1)!}\,\pi^{2n-1} \int_{-\infty}^{\,\infty} \!\! Q_n\!\left(\frac{x}{\pi}\right) \frac{\log(1+e^x)}{1+e^x}\, dx,
\end{array}
\end{equation*}
which completes the proof of Theorem \ref{withQn}.

\end{proof}

\vspace{0.5cm}

\begin{proof}[Corollary \ref{cor_zeta_odd}][]

This result follows immediately once we find the roots of $Q_n$ explicitly.  To that end, first recall
\begin{equation*}
\tan(-\theta) = -\tan \theta = -\frac{\frac{1}{2\ii}(e^{\ii\theta} -e^{-\ii\theta})} {\frac{1}{2}(e^{\ii\theta} +e^{-\ii\theta})} = \ii \frac{e^{\ii\theta} -e^{-\ii\theta}} {e^{\ii\theta} +e^{-\ii\theta}} = -\ii \frac{1 -e^{2\ii\theta}} {1 +e^{2\ii\theta}}.
\end{equation*}
If $\theta = \frac{k\pi}{q}$ and $\omega_q= e^{\frac{2\pi\ii}{q}}$ is a primitive $q$th root of unity then $e^{2\ii\theta} = e^{\frac{2k\pi\ii}{q}} =\omega_q^k$, and hence
\begin{equation}\label{eq-tan}
\tan\Bigl(\frac{-k\pi}{q}\Bigr) = -\ii \frac{1_{\vphantom{\int}} -\omega_q^k } {1 +\omega_q^k \vphantom{\int^2} }.
\end{equation}

\begin{lemma}\label{Lem-Qnroots}
The roots of $Q_n$ are $\tan\bigl(\frac{k\pi}{2n}\bigr)$ for $-(n-1) \leq k \leq n-1$.
\end{lemma}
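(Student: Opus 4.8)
The plan is to show directly that each of the $2n-1$ values $t_k=\tan(k\pi/2n)$, for $-(n-1)\le k\le n-1$, is a root of $Q_n$, and then observe that this already exhausts the roots since $Q_n$ has degree $2n-1$. Fix such a $k$ with $k\neq 0$ (the case $k=0$ is immediate, as $Q_n$ has no constant term). Set $q=2n$ and $\omega=\omega_q=e^{2\pi\ii/q}$, so that by~(\ref{eq-tan}) we have $t_{-k}=-\ii\,(1-\omega^k)/(1+\omega^k)$. The key idea is to recognize the sum defining $Q_n$ in~(\ref{eq-Q}) as the odd part of a binomial expansion. Precisely, writing $z=\omega^k$, I would expand $(1+\sqrt{z})^{2n}$ and $(1-\sqrt{z})^{2n}$ by the binomial theorem and subtract: the even-index terms cancel and we obtain
\begin{equation*}
(1+\sqrt{z})^{2n}-(1-\sqrt{z})^{2n}=2\sum_{k=1}^{n}\binom{2n}{2k-1}z^{(2k-1)/2}=2\sqrt{z}\sum_{k=1}^{n}\binom{2n}{2k-1}z^{k-1}.
\end{equation*}
Comparing with~(\ref{eq-Q}), one sees that $Q_n(t)$ is (up to the explicit prefactor and a substitution) exactly this odd-part polynomial evaluated with $t^2$ playing the role of $-z$; I will make the bookkeeping precise by writing $t_{-k}^{2}=-(1-z)^2/(1+z)^2$ and tracking signs through the factor $(-1)^n/(2n)$ and the powers of $\ii$.

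The heart of the computation is then the following: substituting $t=t_{-k}$ into the sum $\sum_{k=1}^n(-1)^k\binom{2n}{2k-1}t^{2k-1}$ and clearing the denominator $(1+z)^{2n-1}$, the claim that this vanishes becomes the statement that $(1+z)^{2n}-(1-z\cdot(\text{something}))^{2n}$-type expression is zero — more concretely, after the substitution the numerator is proportional to $(1+z)^{2n}-(1-(-z))^{2n}\cdot(\pm 1)$, and since $z=\omega^k$ with $\omega$ a $q$th root of unity and $q=2n$, one checks that $(-z)^{?}$ and $z^{?}$ conspire so that the two $2n$-th powers agree. The cleanest route is: the roots of $x^{2n}-1$ are the $\omega^k$; equivalently $(1+u)^{2n}=(1-u)^{2n}$ has solutions $u$ for which $(1+u)/(1-u)$ is a $2n$-th root of unity other than $1$, i.e.\ $(1+u)/(1-u)=\omega^k$ for $1\le k\le 2n-1$, which solving gives $u=\ii\tan(k\pi/2n)$ using~(\ref{eq-tan}) again (with $q=2n$). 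Thus $u=\ii t_{-k}$ ranges over exactly the $2n-1$ nonzero... actually over the $2n-1$ values $\ii\tan(k\pi/2n)$, $k=1,\dots,2n-1$, which by periodicity and oddness of $\tan$ coincide as a set with $\{\,\ii\tan(k\pi/2n): -(n-1)\le k\le n-1\,\}\cup\{\text{one value where }\tan\text{ is }\infty\text{, namely }k=n\}$; the $k=n$ case corresponds to $u=\ii\tan(\pi/2)$ which is the "point at infinity" and is exactly the reason $Q_n$ has degree $2n-1$ rather than $2n$.

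Putting this together: from $(1+u)^{2n}-(1-u)^{2n}=0$ with $u=\ii t$ we get, expanding as above, $\sum_{k=1}^{n}\binom{2n}{2k-1}(\ii t)^{2k-1}=0$, i.e.\ $\ii\sum_{k=1}^n\binom{2n}{2k-1}(-1)^{k-1}t^{2k-1}=0$, which is precisely (a nonzero multiple of) $Q_n(t)=0$. Hence every $t=\tan(k\pi/2n)$ with $1\le k\le 2n-1$, $k\neq n$, is a root of $Q_n$; together with the obvious root $t=0$ (the $k=0$ case) and using that $\tan$ has period $\pi$ and is odd, these are the $2n-1$ distinct values $\tan(k\pi/2n)$ for $-(n-1)\le k\le n-1$. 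Since $\deg Q_n=2n-1$ and $Q_n$ is monic, there are no further roots, completing the proof. The main obstacle I anticipate is purely one of bookkeeping: matching the prefactor $(-1)^n/(2n)$ and the power $\ii^{2k-1}$ against the binomial expansion without sign errors, and correctly handling the excluded index $k=n$ (the degree drop) so that the count of roots comes out to exactly $2n-1$. Once those signs are pinned down the identity is forced by the elementary fact that $(1+u)^{2n}=(1-u)^{2n}$ has exactly the solutions $u=\ii\tan(k\pi/2n)$.
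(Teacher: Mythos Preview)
Your proposal is correct and follows essentially the same approach as the paper: you recognize $Q_n(t)$ as a constant multiple of $(1+\ii t)^{2n}-(1-\ii t)^{2n}$ via the binomial expansion, and then read off the roots from $(1+\ii t)/(1-\ii t)=\omega_{2n}^k$, which by~(\ref{eq-tan}) gives $t=\tan(k\pi/2n)$. The paper's write-up is more direct---it goes straight to the identity $Q_n(t)=\frac{(-1)^n\ii}{4n}\bigl[(1+\ii t)^{2n}-(1-\ii t)^{2n}\bigr]$ rather than detouring through the $\sqrt{z}$ substitution you abandon---but the mathematical content is the same.
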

\begin{proof} Equivalently, we show the roots are $\tan\bigl(-\frac{k\pi}{2n}\bigr)$ for $-(n-1) \leq k \leq n-1$. Consider
\begin{equation*}
\renewcommand{\arraystretch}{2.1}\begin{array}{rcl}
\displaystyle \frac{(-1)^n\,\ii}{4n} \Bigl[(1+\ii t)^{2n}-(1-\ii t)^{2n}\Bigr] &=& \displaystyle %
\frac{(-1)^n\, \ii}{4n} %
\sum_{j=0}^{2n} \binom{2n}{j}\Bigl( \ii^j- (-\ii)^j\Bigr)\,t^j \\ &=& \displaystyle %
\frac{(-1)^n\,\ii}{2n} %
\sum_{k=1}^{n} \binom{2n}{2k-1}\ii(-1)^{k-1}\, t^{2k-1}\\ &=& \displaystyle %
\frac{(-1)^{n}}{2n} %
\sum_{k=1}^{n} \binom{2n}{2k-1}(-1)^{k}\,t^{2k-1}\\[-3mm] &=& \displaystyle %
Q(t).
\end{array}
\end{equation*}
Thus $Q_n(t)=0 \iff (1+\ii t)^{2n}=(1-\ii t)^{2n} \iff \frac{1 -\ii t}{1 +\ii t\vphantom{\int^2}}=\omega_{2n}^k$ for some $k$ with $-(n-1)\leq k \leq n-1$, which holds \textsc{iff} $t= -\ii\frac{1 -{\omega_{2n}^k}_{\vphantom{g}}} {1 +\omega_{2n}^k \vphantom{\int^2}} =\tan\bigl(\frac{-k\pi}{2n}\bigr)$ from~(\ref{eq-tan}).

\end{proof}

\vspace{0.5cm}

Since $Q_n$ is monic of degree $2n-1$ and we have exhibited $2n-1$ distinct roots we obtain expression (\ref{eq-zeta-odd}), which completes the proof of Corollary \ref{cor_zeta_odd}.
\end{proof}

As an example,
\begin{equation*}
\zeta(5) = \frac{1}{6} \!\!\int\limits_{-\infty}^{\infty} (x^3-\pi^2 x)\frac{\log(1+e^x)}{1+e^x}\,dx.
\end{equation*}

\section{Examples, potential consequences and generalizations}\label{persp}

The first values in terms of zeta functions of the integral (\ref{mainint}) are provided in the table below.

\begin{table}[!ht]
\begin{tabular}{|c|c|}\hline
$n$ & $\qquad\displaystyle \int_{-\infty}^{\, \infty \vphantom{\int^3}} \!z^{n}\,\frac{\log(1 +e^z)} {1+e^z\vphantom{\int^1}}\,dz \qquad$ \\[2mm] \hline 
$0$ & $\zeta(2)\vphantom{\int^3}$ \\[2mm] 
$1$ & $\zeta(3)$ \\[2mm] 
$2$ & $7\zeta(4)$ \\[2mm] 
$3$ & $6\bigl[\zeta(5) + \zeta(2)\zeta(3)\bigr]$ \\[2mm]
$4$ & $\displaystyle\frac{279}{2}\zeta(6)$ \\[4mm]
$5$ & $30\bigl[ 4\zeta(7) +4\zeta(2)\zeta(5) +7\zeta(3)\zeta(4)\bigr]$ \\[2mm]
\hline
\end{tabular} % \label{tab1}
\end{table}
Following Euler, from~(\ref{eq-even-zeta-Euler}) we know that $\zeta(n)$ is transcendental for all even~$n$. It is conjectured that all values at odd integers are irrational and even algebraically independent over the field $\Q(\pi)$, and thus all transcendental~\cite{fischler2002}, but progress towards these conjectures has been limited. Famously in 1978 Ap\'ery proved that $\zeta(3)$ is irrational \cite{apery1979,poorten1979}. No other explicit odd~$n$ is known for which $\zeta(n)$ has been proved to be irrational, although in 2000, Rivoal proved that there exists an infinity of irrational numbers among odd integer values of the zeta function~\cite{rivoal2000}, and one year later Zudilin showed that at least one of the four numbers $\zeta(5)$, $\zeta(7)$, $\zeta(9)$, $\zeta(11)$ is irrational~\cite{zudilin2001}. These proofs are often formulated by writing $\zeta(n)$ in terms of certain integrals, and obtaining rational approximations thereby. Perhaps integrals related to the ones considered in this paper may be of use in these endeavours.

We can also consider generalizations. In the even argument case, it is fairly easy to extend our argument to show that
\begin{equation}
\int_{-\infty}^{\,\infty}\!\! x^{2n}\frac{\log(1+e^{ax})}{1+e^{bx}}dx=(2n)!\,\left(1-2^{-2n-1}\right)\frac{\bigl[(2n+1)a^{2n+2}+b^{2n+2}\,\bigr]}{a^{2n+1}b^{2n+2}}\zeta(2n+2),
\end{equation}
where $a,b>0$. We could not find any simple generalization in the odd case, only some particular values, such as
\begin{equation}
\int_{-\infty}^{\infty}x\frac{\log(1+e^{x/2})}{1+e^{x}}dx=-\frac{\pi^2}{4}\log 2 - \frac{3}{8}\zeta(3),
\end{equation}
or the following formulas connecting $\pi$, the Ap\'ery constant $\zeta(3)$ and the Catalan constant $G$:
\begin{equation}
\int_{-\infty}^{\, \infty}x\frac{\log(1+e^{2x})}{1+e^{x}}dx=\frac{1}{16}\left[16\pi G+2\pi^2\log 2-3\zeta(3)\right]
\end{equation}
and
\begin{equation}
\int_{-\infty}^{\, \infty}x\frac{\log(1+e^{2x})}{1+e^{3x}}dx=-\frac{1}{144}\left[16\pi G-6\pi^2\log 2+\zeta(3)\right].
\end{equation}
It would be interesting to know for which integers $a$, $b$ and $n$ there is an explicit closed form for 
\begin{equation*}
\int_{-\infty}^{\, \infty}\!\! x^{2n+1} \frac{\log(1+e^{ax})}{1+e^{bx}}\, dx.
\end{equation*}

\section{Conclusion}

A relation between values of the Riemann zeta function $\zeta$ and a family of integrals, encountered in the Sommerfeld expansion of thermodynamic quantities in statistical physics was discussed. It provides an integral representation of $\zeta(2n)$, where $n$ is an integer, and an expression of $\zeta(2n+1)$ involving one of the above mentioned integrals together with a sum over harmonic numbers. The latter expression eventually leads to an integral representation of $\zeta(2n+1)$ as well. Possibly such integral representations may be useful for investigations related to the potential irrationality of odd values of the Riemann zeta function. 

\begin{center}
\textbf{Acknowledgements}
\end{center}

\noindent 
We would like to acknowledge useful discussions with A. Sofo, G. Herv\'e, V. N. P. Anghel, D. McNeil, Kwang-Wu Chen and S. Yurkevich. We are also indebted to the anonymous referee for useful comments and suggestions.

\end{document}